\documentclass[reqno,11pt,centertags,draft]{amsart}
\usepackage{amsmath,amsthm,amscd,amssymb,latexsym,upref}
\date{\today}


\newcommand{\bT}{{\mathbb{T}}}

\renewcommand{\a}{\alpha}
\newcommand{\be}{\beta}

\newcommand{\La}{\Lambda}


\allowdisplaybreaks
\numberwithin{equation}{section}
\newtheorem{theorem}{Theorem}[section]

\newtheorem{lemma}[theorem]{Lemma}

\theoremstyle{definition}

\newtheorem{example}[theorem]{Example}

\begin{document}

\title[]{On Baxter's difference systems}
\author[]{J. S. Geronimo, L. Golinskii}


\address{Georgia Institute of Technology, USA}
\email{geronimo@math.gatech.edu}

\address{Institute for Low Temperature Physics and Engineering, Ukraine}
\email{leonid.golinskii@gmail.com}

\date{\today}

\begin{abstract}
We study the asymptotics of solutions of a difference system
introduced by Baxter by using the general method for the asymptotic
representation of such solutions due to Benzaid and Lutz. Some
results of Tauberian type are obtained in the case when the spectral
parameter belongs to the unit circle.
\end{abstract}

\maketitle

\section{Introduction}

In this paper we study the asymptotics of certain solutions to a
system of difference equations introducded by Baxter \cite{Baxa, Baxb, Baxc}.
Baxter himself was interested in generalizing some results that had
been obtained by Szego, Geronimus, and Verblunsky on the difference
system satisfied by polynomials orthogonal on the unit circle. This
system has been of much interest since it appears in the
study by Miller et al \cite{M} and Gesztesy et al \cite{Gesa, Gesb} of solutions of the Ablowitz-Ladik
equations and by Geronimo et.al \cite{G} who generalized the results
of Geronimo and Johnson \cite{GJ} (see also \cite{simA, simB}).

\section{The Baxter equations}

Let $\mu$ be a complex Borel measure on the unit circle $\bT$ of
bounded total variation. We assume throughout that the $n$-th
Toeplitz determinant associated with $\mu$
$$
D_n(\mu)=\det\|\mu_{i-j}\|_{i,j=0}^{n-1}\not=0,\qquad
\mu_k=\int_{\bT}\zeta^{-k}d\mu,
$$
for all $n=1,2,\ldots$. Construct polynomials $\hat\phi_n$ and
$\hat\psi_n$ of degree $n$ in $z$ and ${1\over z}$, respectively, by
$$
\hat\phi_n(z)=
\begin{vmatrix}\mu_0 & \mu_{-1} & \ldots &\mu_{-n}\\
                   \mu_1 & \mu_0 & \ldots & \mu_{-n+1}\\
                   \ldots & \ldots & \ldots & \ldots\\
                   \mu_{n-1} & \mu_{n-2} & \ldots & \mu_{-1}\\
                   1 & z & \ldots & z^n
                   \end{vmatrix} = D_n(\mu)z^n+\ldots
$$
and
$$
\hat\psi_n(z)=
\begin{vmatrix}\mu_0 & \mu_{1} & \ldots & \mu_{n}\\
                   \mu_{-1} & \mu_0 & \ldots & \mu_{n-1}\\
                   \ldots & \ldots & \ldots & \ldots\\
                   \mu_{-n+1} & \mu_{-n+2} & \ldots & \mu_{1}\\
                   1 & z^{-1} & \ldots & z^{-n}
                   \end{vmatrix}=D_n(\mu)z^{-n}+\ldots
$$
such that,
$$
\int_{\bT}\hat\phi_n(\zeta)\zeta^{-k}d\mu=
\int_{\bT}\hat\psi_n(\zeta)\zeta^{k}d\mu = D_{n+1}(\mu)\delta_{nk},
\quad k=0,1,\ldots,n. $$
 Pick $\ell_n$ from $\ell_n^2=(D_n D_{n+1})^{-1}$ ($\ell_n$ is
 determined up to a $\pm$ sign) and put
\begin{equation*}
\begin{split}
\phi_n(z)&:=\ell_n\hat\phi_n(z)=\kappa_nz^n+\ldots, \\
\psi_n(z)&:=\ell_n\hat\psi_n(z)=\kappa_nz^{-n}+\ldots, \quad
\kappa_n=\ell_nD_n(\mu)\ne0. \end{split}
\end{equation*}
 So
$$
\int_{\bT}\hat\phi_n(\zeta)\zeta^{-k}d\mu=
\int_{\bT}\hat\psi_n(\zeta)\zeta^{k}d\mu = {\delta_{nk}\over
\kappa_n}, \quad k=0,1,\ldots,n, $$ and $\{\phi_n,\psi_n\}$ form a
bi-orthogonal sequence
$$ \int_{\bT} \phi_n(\zeta)\psi_m(\zeta)d\mu =
\delta_{n,m}.
$$
The polynomials will be unique once a determination is made for
their leading coefficients.

The {\it Baxter parameters} are introduced in the following way
$$
\beta_{n}:= -\kappa_{n-1}\int_{\bT}\phi_{n-1}(\zeta)\zeta d\mu, \ \
\alpha_{n}:=-\kappa_{n-1}\int_{\bT}\psi_{n-1}(\zeta)\zeta^{-1} d\mu,
\quad n=1,2,\ldots.
$$
It is easy to see that
\begin{equation*}
\begin{split}
\beta_{n} &=-\ell_{n-1}^2D_{n-1}(\mu)
\begin{vmatrix} \mu_0 & \mu_{-1} &\ldots &\mu_{-n+1}\\
                \mu_1 & \mu_0 & \ldots & \mu_{-n+2}\\
                \ldots & \ldots & \ldots & \ldots\\
                \mu_{n-2} & \mu_{n-3} & \ldots & \mu_{-1}\\
                \mu_{-1} & \mu_{-2} & \ldots & \mu_{-n}
                \end{vmatrix} \\
&={(-1)^{n}\over D_{n}(\mu)}
\begin{vmatrix} \mu_{-1} & \mu_{-2} &\ldots &\mu_{-n}\\
                \mu_0 & \mu_{-1} & \ldots & \mu_{-n+1}\\
                \ldots & \ldots & \ldots & \ldots\\
                \mu_{n-2} & \mu_{n-3} & \ldots & \mu_{-2}\\
                \mu_{n-2} & \mu_{n-3} & \ldots & \mu_{-1}
                \end{vmatrix}
=(-1)^{n} {D_{n}(\zeta\mu)\over D_{n}(\mu)}.
\end{split}\end{equation*}
Similarly,
$$
\alpha_{n}=(-1)^{n} {D_{n}(\zeta^{-1}\mu)\over D_{n}(\mu)}.
$$
Next, we have
$$
1-\alpha_n\beta_n={D_n^2(\mu)-D_n(\zeta\mu)D_n(\zeta^{-1}\mu)\over
D_n^2(\mu)}.
$$
On the other hand, the Silvester identity applied to $D_{n+1}(\mu)$
gives
$$
D_{n+1}(\mu)D_{n-1}(\mu)=D_n^2(\mu)-D_n(\zeta\mu)D_n(\zeta^{-1}\mu),
$$
so
$$
1-\alpha_n\beta_n={D_{n+1}(\mu)D_{n-1}(\mu)\over D_n^2(\mu)}.
$$
Since $\kappa_n^2=\ell_n^2D_n^2(\mu)=D_n(\mu)/D_{n+1}(\mu)$, then
\begin{equation}\label{baxterpar}
1-\alpha_n\beta_n={\kappa_{n-1}^2\over \kappa_n^2}\,,\qquad
\kappa_n^{-2}=\prod_{j=1}^n (1-\alpha_j\beta_j).
\end{equation}

We define two sequences $\{u_n\}$ and $\{v_n\}$ of polynomials of
degree $n$ in $z$ and ${1\over z}$, respectively, by
\begin{equation}\label{un}
u_n(z):=\kappa_nz^n\psi_n(z)=\frac1{D_{n+1}(\mu)}
\begin{vmatrix} \mu_0 & \mu_{1} & \ldots & \mu_{n} \\
                \mu_{-1} & \mu_0 & \ldots & \mu_{n-1}\\
                \ldots & \ldots & \ldots & \ldots\\
                \mu_{-n+1} & \mu_{-n+2} & \ldots & \mu_{1}\\
                z^n & z^{n-1} & \ldots & 1
                \end{vmatrix},\end{equation}

\begin{equation}\label{vn}
v_n(z):=\kappa_nz^{-n}\phi_n(z)=\frac1{D_{n+1}(\mu)}
\begin{vmatrix} \mu_0 & \mu_{-1} & \ldots &\mu_{-n}\\
                \mu_1 & \mu_0 & \ldots & \mu_{-n+1}\\
                \ldots & \ldots & \ldots & \ldots\\
                \mu_{n-1} & \mu_{n-2} & \ldots & \mu_{-1}\\
                z^{-n} & z^{-n+1} & \ldots & 1\end{vmatrix}.
\end{equation}
It is easy to check that $\beta_n u_n$ and
$\kappa_n\phi_n-\kappa_{n-1}z\phi_{n-1}$, both polynomials in $z$ of
degree $\le n$, have the same Fourier coefficients with respect to
$\mu$. The same goes for $\alpha_n v_n$ and
$\kappa_n\psi_n-\kappa_{n-1}z^{-1}\psi_{n-1}$. Hence
\begin{equation*}
\begin{split}
\beta_nu_n(z) &=\kappa_n\phi_n(z)-\kappa_{n-1}z\phi_{n-1}(z), \\
\alpha_n v_n(z) &=\kappa_n\psi_n(z)-\kappa_{n-1}z^{-1}\psi_{n-1}(z).
\end{split}
\end{equation*}
In the matrix form
\begin{equation*}
\begin{split}
\Phi_{n-1}(z) &=\frac{\kappa_{n}}{\kappa_{n-1}}
\begin{bmatrix} z^{-1}&-z^{-1}\beta_{n}\\
-\alpha_{n}&1 \end{bmatrix}\,\Phi_n(z), \\
\Phi_n(z) &=\begin{bmatrix} \phi_n(z)\\
z^n\psi_n(z)\end{bmatrix}=\begin{bmatrix}\phi_n(z)\\
\kappa_n^{-1}u_n(z)\end{bmatrix}.
\end{split}
\end{equation*}
Taking inverse matrices and shifting indices we have by
\eqref{baxterpar}
\begin{equation}\label{baxtereqone}
\Phi_{n+1}(z) =\frac{\kappa_{n+1}}{\kappa_{n}}
\begin{bmatrix} z & \beta_{n+1}\\
\alpha_{n+1}z&1 \end{bmatrix}\,\Phi_n(z).
\end{equation}
Likewise we obtain
\begin{equation}\label{baxtereqtwo}
\Psi_{n+1}(z) =\frac{\kappa_{n+1}}{\kappa_{n}}
\begin{bmatrix} z^{-1} & \a_{n+1}\\
\be_{n+1}z^{-1} & 1 \end{bmatrix}\,\Psi_n(z), \quad
\Psi_n(z) =\begin{bmatrix} \psi_n(z)\\
\kappa_n^{-1}v_n(z)\end{bmatrix}.
\end{equation}
Inspection of the above equations shows that
\begin{equation}\label{phipsi}
\Psi_n(z;\alpha,\beta)=\Phi_n(z^{-1};\beta,\alpha)
\end{equation}
(since $\phi_0=\psi_0=1$).

In the case when $\beta_n=\bar\alpha_n,\ |\alpha_n|<1 $ for all $n$,
the equations of Baxter reduce to those satisfied by polynomial
orthogonal with respect to some positive measure supported on the
unit circle.

\section{The Benzaid and Lutz method}

As mentioned in the introduction, Benzaid and Lutz \cite{BL}
developed methods for the asymptotic representation for solutions of
difference equations. We will be interested in the case when the
coefficients in equation \eqref{baxtereqone} satisfy
\begin{equation}\label{ltwo}
\sum_{n=1}^{\infty}|\alpha_n|^2 < \infty,\qquad
\sum_{n=1}^{\infty}|\beta_n|^2<\infty,\qquad {\rm and}\qquad
\alpha_n\beta_n\ne 1\quad \forall n\geq1. \end{equation}
 If in equation \eqref{baxtereqone} we switch to the monic polynomials
$\hat\Phi_n=\kappa_n^{-1}\Phi_n$, it becomes
\begin{equation}\label{baxtermonic}
\begin{split}
\hat\Phi_{n+1} (z) &=\begin{bmatrix} z & \beta_{n+1}\\ \alpha_{n+1}
z & 1\end{bmatrix}\,\hat\Phi_n (z) =\left[\Lambda(z) +
U(z,n)\right]\,\hat\Phi_n(z), \\ U(z,n)&= \begin{bmatrix} 0 &
\beta_{n+1}\\ \alpha_{n+1}z & 0\end{bmatrix}, \qquad
\Lambda(z)=\begin{bmatrix} z & 0\\ 0 & 1\end{bmatrix}.
\end{split}
\end{equation}
The unperturbed system is
$\hat\Phi^0_{n+1}(z)=\Lambda(z)\hat\Phi^0_n(z)$, and following
Benzaid and Lutz we choose a $Q(z,n)$ such that ${\rm diag}
(Q(z,n))=0$ satisfying the equation
\begin{equation}\label{benzlutz}
U(n)+\Lambda(z) Q(z,n)-Q(z,n+1)\Lambda(z) =0. \end{equation} It is
easy to check that $Q$ in \eqref{benzlutz} can be taken as
$$ Q(z,n)=\begin{bmatrix} 0&q_{12}(z,n)\\ q_{21}(z,n)&0\end{bmatrix} $$
with
\begin{equation}\label{qij}
\begin{split}
q_{12}(z,n)&=\sum^n_{k=1} \beta_k z^{n-k}= \sum^{n-1}_{j=0}
\beta_{n-j} z^j, \\
q_{21}(z,n)&=-\sum^\infty_{k=n+1}\alpha_k
z^{k-n}=-\sum^\infty_{j=1}\alpha_{n+j} z^j.
\end{split}
\end{equation}
We could have defined a new variable $X$ from
$\hat\Phi_n(z)=(I+Q(z,n))X(z,n)$, and so
$$ X(z,n+1)=(I+Q(z,n+1))^{-1}(\Lambda(z)+U(z,n))(I+Q(z,n))X(z,n). $$
However in order not to introduce a denominator when computing
inverses $(I+Q)^{-1}$ we will carry out the Benzaid and Lutz
procedure in two steps using nilpotent matrices
$$Q_1(z,n)=\begin{bmatrix} 0 & q_{12}(z,n)\\ 0 & 0\end{bmatrix},\qquad
Q_2(z,n)= \begin{bmatrix} 0 & 0\\ q_{21}(z,n) & 0\end{bmatrix},
$$
satisfying the equations
\begin{equation}\label{benzlutzone}
U_j(n)+\Lambda(z) Q_j(z,n)-Q_j(z,n+1)\Lambda(z) =0, \qquad j=1,2
\end{equation} with
$$ U_1(z,n)=\begin{bmatrix} 0&0 \\ \alpha_{n+1}z &0\end{bmatrix}, \qquad U_2(z,n)=
\begin{bmatrix} 0&\beta_{n+1}\\ 0&0 \end{bmatrix},
$$
respectively.

Some useful properties of the above functions are

\begin{lemma}\label{bzlu}
With the conditions given by $\eqref{ltwo}$:
\begin{enumerate}
\item For $|z|<1$  $q_{ij}(z,n)\to 0$ as $n\to\infty$, both are
finite for almost every $z\in{\bT}$;
\item For almost every
$z\in\bT$ $q_{21}(z,n)\to 0$ as $n\to\infty$;
\item For $|z|<1$,
$q_{ij}(z,n)\in\ell_2$ and belong to $H^2$ as functions of $z$ for
all $n\geq 1$.
\end{enumerate}
\end{lemma}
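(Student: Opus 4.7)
The plan is to dispatch parts (1) and (3) by direct Cauchy--Schwarz estimates built on the $\ell^2$ hypotheses in \eqref{ltwo}, and to reduce part (2) to Carleson's theorem on almost-everywhere convergence of Fourier series for $L^2$ functions.

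For part (1) with $|z|<1$, Cauchy--Schwarz applied to the tail series gives
\[
|q_{21}(z,n)|^2 \;\le\; \frac{|z|^2}{1-|z|^2}\sum_{k>n}|\alpha_k|^2 \;\longrightarrow\; 0,
\]
since the tail of an $\ell^2$ sequence vanishes. For $q_{12}(z,n)=\sum_{k=1}^n\beta_k z^{n-k}$, fix $\ep>0$, pick $N$ with $\sum_{k>N}|\beta_k|^2<\ep^2$, and split at $k=N$: the part with $k\le N$ is a fixed finite sum whose terms carry $z^{n-k}$ with $n-k\to\infty$, hence vanishes; the part with $k>N$ is bounded by $\ep/\sqrt{1-|z|^2}$, again by Cauchy--Schwarz. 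For the finiteness claim on $\bT$, $q_{12}$ is a polynomial in $z$ and so finite for every $z$, while $q_{21}(\cdot,n)$ is a power series with $\ell^2$ Taylor coefficients, so its $L^2(\bT)$ realization is finite a.e.

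For part (3), the Taylor coefficients of $q_{12}(\cdot,n)$ at $z=0$ form the sequence $(\beta_n,\beta_{n-1},\dots,\beta_1,0,0,\dots)$, and those of $q_{21}(\cdot,n)$ form $(0,-\alpha_{n+1},-\alpha_{n+2},\dots)$; both are $\ell^2$ by \eqref{ltwo}, which places each function in $H^2(\bD)$ for every fixed $n\ge 1$. For the $\ell^2$-in-$n$ assertion at fixed $|z|<1$, a weighted Cauchy--Schwarz (weights $|z|^{n-k}$ for $q_{12}$, weights $|z|^j$ for $q_{21}$) followed by a Fubini interchange controls $\sum_n|q_{12}(z,n)|^2$ and $\sum_n|q_{21}(z,n)|^2$ by constant multiples of $\|\beta\|_2^2/(1-|z|)^2$ and $\|\alpha\|_2^2/(1-|z|)^2$, respectively.

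The essential difficulty, and the one I expect to be the main obstacle, is part (2). The starting point is the identity
\[
q_{21}(z,n) \;=\; -z^{-n}\bigl(F(z)-S_n(z)\bigr),
\]
where $F(z):=\sum_{k\ge 1}\alpha_k z^k$ lies in $H^2(\bD)$ (since $\sum|\alpha_k|^2<\infty$) and $S_n(z):=\sum_{k=1}^n\alpha_k z^k$ is its $n$-th partial sum. On $\bT$ one has $|z^{-n}|=1$, so $|q_{21}(z,n)|=|F(z)-S_n(z)|$, and the claim reduces to pointwise a.e.\ convergence of the Fourier partial sums of the boundary function of $F$, which sits in $L^2(\bT)$. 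This is precisely Carleson's theorem, and invoking it completes the argument.
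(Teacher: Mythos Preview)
Your proof is correct and follows essentially the same route as the paper. The paper packages parts (1) and (3) for $|z|<1$ into the single observation that each $q_{ij}(z,\cdot)$ is the convolution of an $\ell^2$ sequence ($\{\beta_k\}$ or $\{\alpha_k\}$) with the $\ell^1$ sequence $\{z^j\}$, whence Young's inequality gives $\ell^2$-ness (hence also convergence to $0$); your weighted Cauchy--Schwarz plus Fubini argument is exactly the proof of that Young bound written out by hand, and for part (2) and the a.e.\ boundary statements both you and the paper invoke Carleson's theorem.
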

\begin{proof}  For $|z|<1$, it was noted by Benzaid and Lutz that (1) and (3)
follow since $q_{ij}$ is the convolution of an $l^2$ sequence with
an $l^1$ sequence. For $|\zeta|=1$ the results follow from the
celebrated Carleson's theorem.
\end{proof}

For $|z|<1$ define $Y(z,n)$ from
\begin{equation}\label{newvar}
\hat\Phi_n (z)=(I+Q_1(z,n))(I+Q_2(z,n))Y(z,n).\end{equation}
 Then because of \eqref{baxtermonic} and $(I+Q_j)^{-1}=I-Q_j$, we have
\begin{equation*}
\begin{split}
Y(z, n+1)= &(I-Q_2(z,n+1))(I-Q_1(z,n+1))(\La+U(n)) (I+Q_1(z,n))
\\ &(I+Q_2(z,n))Y(z,n). \end{split}
\end{equation*}
After multiplying out the brackets the ``first order terms'' vanish
(that is the key idea of Benzaid and Lutz), and we come to
$$
Y(z,n+1)=(\Lambda(z)+V(z,n))Y(z,n), \qquad V(z,n)=\begin{bmatrix}
v_{11}(z,n)& v_{12}(z,n)\\ v_{21}(z,n)& v_{22}(z,n) \end{bmatrix},
$$
with
\begin{equation}\label{vij}
\begin{split}
v_{11}(z,n) &=\beta_{n+1}q_{21}(z,n)(1+\rho(z,n+1)),\\
v_{12}(z,n)
&=\beta_{n+1}(\rho(z,n+1)+\rho(z,n)+\rho(z,n)\rho(z,n+1)),\\
v_{21}(z,n) &=-\beta_{n+1}q_{21}(z,n)q_{21}(z,n+1),\\
v_{22}(z,n) &=-\beta_{n+1}q_{21}(z,n+1)(1+\rho(z,n)),\end{split}
\end{equation}
where $\rho(z,n)=q_{12}(z,n)q_{21}(z,n)$. By Lemma \ref{bzlu}
$V(z,n)\in\ell_1$ for each $|z|<1$.

Let $T_r=\{|z|=r\}$ be a circle of a radius $0<r<1$. For $z\in T_r$
we define \ $W(z,n)=\La^{-1}V(z,n)\in\ell^1$, then
$$ Y(z,n+1)=\La(z)(I+W(z,n))Y(z,n). $$
Consequently, for $z\in T_r$
\begin{equation}\label{firstbound}
\begin{split}
\|Y(z,n+1)\| &\le\left\| \La(z)\right\|\|I+W(z,n)\| \|Y(z,n)\|\\
&\le \|I+W(z,n)\| \| Y(z,n)\|\le (1+\|W(z,n)\|)\|Y(z,n)\|\cr &\le
\|Y(z,0)\|\exp\sum^\infty_{n=0} \|W(z,n)\|< c_1,
\end{split}
\end{equation}
 where
$$ c_1=\sup_{|z|=r}\|Y(z,0)\|\exp\sum^\infty_{0} \|W(z,n)\|. $$
It now follows by induction that
\begin{equation}\label{convone}
\left\|Y(z,n)-\begin{bmatrix} z^{n-m}&0\\
0&1\end{bmatrix}\,Y(z,m)\right\|<c_1\sum_{k=m}^n\|W(z,k)\|.
\end{equation}
Let us write the latter inequality for the vector function
$Y(z,n)=\begin{bmatrix} y_{1}(z,n)\\ y_{2}(z,n)\end{bmatrix} $
coordinatewise
\begin{equation}\label{coordin}
|y_{1}(z,n)-z^{n-m}y_{1}(z,m)|+|y_{2}(z,n)-y_{2}(z,m)|\le
c_1\sup_{|z|=r}\sum_{k=m}^n\|W(z,k)\|\end{equation} for $|z|=r$, and
hence by the Maximum Modulus Theorem for $|z|\le r$. So
\begin{equation}\label{ualbe}
\lim_{n\to\infty}y_{1}(z,n)=0, \quad
\lim_{n\to\infty}y_{2}(z,n)=u(z;\alpha,\beta),\end{equation}
 $u$ is an analytic function, uniformly on compact subsets of the unit disk.
From the relation between $Y(n)$ and $\hat\Phi_n$ \eqref{newvar}
\begin{equation}\label{psiytwo}
\hat\Phi_n(z)= \begin{bmatrix} 1&q_{12}(z,n)\\
q_{21}(z,n)&1+\rho(z,n)\end{bmatrix} \begin{bmatrix} y_{1}(z,n)\\
y_{2}(z,n)\end{bmatrix}\end{equation}
and Lemma \ref{bzlu} we see
that the above implies that $\{\hat\Phi_n(z)\}$ converge uniformly
on compact subsets of $|z|<1$ to
\begin{equation}\label{hatphi}
\hat\Phi(z)=\begin{bmatrix} 0\\
u(z;\alpha,\beta)\end{bmatrix}.\end{equation}

Since by \eqref{baxterpar} the leading coefficients $\kappa_n$
converge, the same conclusion holds for $\Phi_n$.

Now, let $|z|>1$. By using \eqref{phipsi} we see that $\Psi_n$
converge uniformly on compact subsets of the unit disk to
\begin{equation}\label{hatpsi}
\Psi(z)=\begin{bmatrix} 0\\ v(z)\end{bmatrix}, \qquad
v(z)=u(z^{-1};\beta,\alpha). \end{equation}

Thus we have proved,

\begin{theorem}\label{szego}
Suppose $\alpha_n$ and $\beta_n$ satisfy $\eqref{ltwo}$. Then
$\Phi_n(z)$ converges uniformly on compact subsets of $|z|<1$ to
$\eqref{hatphi}$, and $\Psi_n(z)$ converges uniformly on compact
subsets of $|z|>1$ to $\eqref{hatpsi}$.
\end{theorem}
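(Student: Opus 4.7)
The plan is to consolidate the construction of Section 3 into a clean proof; most of the machinery is already in place. The two-step Benzaid--Lutz substitution \eqref{newvar} reduces the monic recursion \eqref{baxtermonic} to $Y(z,n+1)=(\Lambda(z)+V(z,n))Y(z,n)$ with entries of $V$ given by \eqref{vij}. First, I would verify that for each $|z|<1$, $V(z,n)\in\ell^1$ in $n$: every entry in \eqref{vij} is, up to the bounded factor $1+\rho$, a product of the $\ell^2$ sequence $\{\beta_{n+1}\}$ with $q_{12}(z,n)$ or $q_{21}(z,n)$, and the latter lie in $\ell^2$ by Lemma \ref{bzlu}(3); Cauchy--Schwarz then yields summability.

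Second, I would fix a radius $0<r<1$ and pass to $W(z,n)=\Lambda^{-1}(z)V(z,n)$. A direct bound such as $\|q_{12}(z,\cdot)\|_{\ell^2}\le (1-r)^{-1}\|\{\beta_k\}\|_{\ell^2}$ on the circle $T_r=\{|z|=r\}$, and the analogous estimate for $q_{21}$, shows that $\sum_n\|W(z,n)\|$ is finite uniformly in $z\in T_r$. The a priori bound \eqref{firstbound} then gives $\|Y(z,n)\|\le c_1$ on $T_r$, and the telescoped inequality \eqref{convone}--\eqref{coordin} shows that the two coordinates $y_1(z,n),y_2(z,n)$ are uniformly Cauchy on $T_r$. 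Since both are analytic in $z$ on the open disk, the Maximum Modulus Theorem promotes the Cauchy estimate from $|z|=r$ to $|z|\le r$, producing the limits in \eqref{ualbe} locally uniformly on $|z|<1$.

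Third, substituting these limits into \eqref{psiytwo} and invoking Lemma \ref{bzlu}(1) to kill the $q_{ij}(z,n)$ factors yields $\hat\Phi_n(z)\to\hat\Phi(z)$ as in \eqref{hatphi} uniformly on compacta of $|z|<1$. By \eqref{baxterpar}, $\kappa_n^{-2}=\prod_{j=1}^n(1-\alpha_j\beta_j)$ is a convergent infinite product with nonzero limit: the factors are nonzero by hypothesis, and $\sum|\alpha_n\beta_n|<\infty$ by Cauchy--Schwarz applied to \eqref{ltwo}, so the logarithms are absolutely summable. Hence $\kappa_n$ has a nonzero limit and $\Phi_n=\kappa_n\hat\Phi_n$ inherits the locally uniform convergence. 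Finally, the $|z|>1$ statement for $\Psi_n$ is immediate from the symmetry \eqref{phipsi}: interchanging $(\alpha,\beta)\leftrightarrow(\beta,\alpha)$ and $z\leftrightarrow z^{-1}$ in the $\Phi_n$ statement reproduces the $\Psi_n$ statement with $v(z)=u(z^{-1};\beta,\alpha)$ as in \eqref{hatpsi}.

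The main obstacle, though modest, is the second step: upgrading pointwise information (the $\ell^2$ memberships and $\ell^1$-summability that hold for each fixed $z$) to \emph{uniform} summability of $W(z,\cdot)$ on circles $T_r$, so that the Maximum Modulus argument delivers an analytic limit and the convergence is locally uniform rather than merely pointwise. Once that uniformity is secured, the remaining items -- nilpotence of $Q_1,Q_2$ making the substitution \eqref{newvar} automatically invertible, convergence of the infinite product defining $\kappa_n$, and the $|z|>1$ reduction -- are routine consequences of \eqref{ltwo} and the symmetry \eqref{phipsi}.
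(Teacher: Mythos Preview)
Your proposal is correct and follows essentially the same route as the paper's Section~3: the two-step Benzaid--Lutz substitution, $\ell^1$-summability of $V$ via the $\ell^2$ nature of $q_{ij}$, the a priori bound \eqref{firstbound} on $T_r$, the Maximum Modulus passage from $|z|=r$ to $|z|\le r$, and the symmetry \eqref{phipsi} for $|z|>1$. You supply a bit more detail than the paper in places (the explicit Young-type bound for $\|q_{12}(z,\cdot)\|_{\ell^2}$ and the Cauchy--Schwarz justification for the convergence of $\kappa_n$), but the argument and its key steps coincide.
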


\section{Tauberian results}

We now consider what happens for $|z|=1$, and also the boundary
values of the functions considered in the previous section. Although
$\{z^n\}$ is no longer in $\ell^1$, the above argument goes through
as long as we require further assumptions on the coefficients.
Similarly to \eqref{qij} we define
\begin{equation}\label{tildeqij}
\tilde q_{12}(z,n)=\sum^n_{k=1} \alpha_k z^{k-n}, \qquad \tilde
q_{21}(z,n)=-\sum^\infty_{k=n+1}\beta_k z^{n-k}.
\end{equation}
Clearly, Lemma \ref{bzlu} holds for $\tilde q_{ij}$ as well if
$|z|<1$ is replaced by $|z|>1$. Next, put
\begin{equation}\label{unitcirboundone}
M=\left\{z=e^{i\theta}:
\left|\sum_{k=1}^{\infty}\beta_{k}z^{-k}\right|+
\left|\sum_{k=1}^{\infty}\alpha_k z^k\right|<\infty
\right\}\end{equation}
and
\begin{equation}\label{unitcir}
\begin{split}
E &=\{z=e^{i\theta}:
\sum_{k=0}^{\infty}|\beta_{k+1}q_{21}(z,k)|<\infty\},\\
\tilde E &=\{z=e^{i\theta}:\sum_{k=0}^{\infty}|\alpha_{k+1}\tilde
q_{21}(z,k)|<\infty\}.\end{split}\end{equation} We know from the
theory of Fourier series that under assumption \eqref{ltwo} $M$ is
the set of full Lebesgue measure, and for $z\in M$
$$ q_{12}(z,n)-\tilde q_{21}(z,n)=z^n\sum_{k=1}^\infty
\beta_kz^{-k}, \quad \tilde
q_{12}(z,n)-q_{21}(z,n)=z^{-n}\sum_{k=1}^\infty \alpha_kz^{k}. $$

\begin{theorem}\label{united}
Suppose $\eqref{ltwo}$ holds, and $z\in E\cap M$, then for $u_n$
$\eqref{un}$
$$\lim_{n\to\infty} u_n(e^{i\theta})=u^*(e^{i\theta}) $$
exists. Likewise if $\eqref{ltwo}$ holds, and $z\in \tilde E\cap M$,
then for $v_n$ $\eqref{vn}$
$$\lim_{n\to \infty} v_n(e^{i\theta})=
v^*(e^{i\theta}) $$ exists. \end{theorem}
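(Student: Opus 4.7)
\emph{Proof plan.} The strategy is to push the Benzaid--Lutz scheme of Section 3 all the way to the unit circle. For $z\in M$ the series defining $q_{12}$ and $q_{21}$ enjoy good partial-sum control on $\bT$: the identity $q_{12}(z,n)=z^n\sum_{k=1}^n\be_kz^{-k}$ shows that $q_{12}(z,n)$ is bounded in $n$, while $q_{21}(z,n)=-z^{-n}\sum_{k=n+1}^\infty\a_kz^k$ is the tail of a convergent series and therefore $q_{21}(z,n)\to 0$. Consequently $\rho(z,n)=q_{12}(z,n)q_{21}(z,n)\to 0$, with $|\rho(z,n)|\le C(z)|q_{21}(z,n)|$. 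The one-step identity $q_{21}(z,n+1)=z^{-1}q_{21}(z,n)+\a_{n+1}$, valid on $|z|=1$, will allow me to shift indices inside $q_{21}$ at the cost of an extra $\a_{n+1}\be_{n+1}$ term.

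Plugging these estimates into \eqref{vij}, every entry of $V(z,n)$ is dominated, up to bounded factors, by one of $|\be_{n+1}q_{21}(z,n)|$, $|\be_{n+1}q_{21}(z,n+1)|$, or $|\a_{n+1}\be_{n+1}|$. The first is summable by the very definition of $E$, the third by Cauchy--Schwarz from \eqref{ltwo}, and the middle one reduces to the first two via the shift identity. Hence $\sum_n\|V(z,n)\|<\infty$, and since $\|\La(z)^{\pm1}\|=1$ on $\bT$ the same holds for $W(z,n)=\La(z)^{-1}V(z,n)$. The estimate \eqref{firstbound} (which uses only $\|\La\|\le 1$) then yields a uniform bound on $\|Y(z,n)\|$, and the second-coordinate recursion $y_2(z,n+1)-y_2(z,n)=w_{21}(z,n)y_1(z,n)+w_{22}(z,n)y_2(z,n)$ has absolutely summable right-hand side, so $y_2(z,n)\to y_2^*(z)$.

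Finally, \eqref{psiytwo} shows that the second coordinate of $\hat\Phi_n(z)$ is $q_{21}(z,n)y_1(z,n)+(1+\rho(z,n))y_2(z,n)=\kappa_n^{-2}u_n(z)$. On $M$ both $q_{21}$ and $\rho$ vanish while $y_1$ remains bounded, so the right-hand side converges to $y_2^*(z)$; by \eqref{baxterpar} and $\a,\be\in\ell^2$ the coefficients $\kappa_n$ converge to a nonzero limit, giving convergence of $u_n(e^{i\theta})$. The statement for $v_n$ on $\tilde E\cap M$ follows from the duality \eqref{phipsi}: the substitution $(z,\a,\be)\mapsto(z^{-1},\be,\a)$ preserves $M$ on $\bT$ and sends the role of $E$ to that of $\tilde E$, reducing the second assertion to the first. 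I expect the main obstacle to be the $\ell^1$ bound for $V$ in the second paragraph---in particular the reduction of $v_{12}$ and $v_{22}$ to the single summable family $|\be_{n+1}q_{21}(z,n)|$ coming from $E$---where both the $M$-boundedness of $q_{12}$ and the shift identity for $q_{21}$ are indispensable.
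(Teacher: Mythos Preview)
Your proposal is correct and follows essentially the same route as the paper: bound $q_{12}(z,n)$ on $M$, use the shift identity $zq_{21}(z,n+1)=q_{21}(z,n)+z\a_{n+1}$ to reduce $|\be_{n+1}q_{21}(z,n+1)|$ to $|\be_{n+1}q_{21}(z,n)|+|\a_{n+1}\be_{n+1}|$, conclude $\sum_n\|V(z,n)\|<\infty$ on $E\cap M$, and then read off the convergence of $u_n$ from \eqref{psiytwo} exactly as you describe. The only cosmetic difference is that for the $v_n$ statement the paper simply says ``an analogous argument'' (writing out $v^*(z)=\prod(1-\a_j\be_j)^{-1}u(z^{-1};\be,\a)$), whereas you invoke the duality \eqref{phipsi} explicitly; these amount to the same thing.
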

\begin{proof}
 For $z\in M$ we have
$$\sup_n |q_{12}(z,n)|=C(z)<\infty,
$$
so for the matrix entries $v_{ij}$ \eqref{vij}
\begin{equation}\label{vcircle}
\begin{split}
 |v_{11}(z,n)| &\le (1+C|q_{21}(z,n)|)|\beta_{n+1}q_{21}(z,n)|,\\
 |v_{12}(z,n)| &\le C|\beta_{n+1}|(|q_{21}(z,n)|+|q_{21}(z,n+1)|+
 C|q_{21}(z,n)q_{21}(z,n+1)|),\\
 |v_{21}(z,n)| &\le |\beta_{n+1}q_{21}(z,n)q_{21}(z,n+1)|,\\
 |v_{22}(z,n)| &\le
 (1+C|q_{21}(z,n)|)|\beta_{n+1}q_{21}(z,n+1)|.\end{split}\end{equation}
Also
\begin{equation*}
\begin{split}
|z\beta_{n+1}q_{21}(z,n+1)| &\le |\beta_{n+1}q_{21}(z,n)| +
|\beta_{n+1}(zq_{21}(z,n+1)-q_{21}(z,n))|\\ &=|\beta_{n+1}q_{21}(n)|
+ |\beta_{n+1}\alpha_{n+1}|.\end{split}\end{equation*}
 Thus if $z\in E\cap M$, we find
$$\sum^\infty_{n=0} \|V(z,n)\|<\infty, $$
and as in \eqref{firstbound} above
$$\sup_n \|Y(z,n)\|=C_1(z)<\infty, \qquad z\in E\cap M. $$
As in \eqref{coordin} we see that
$$|y_{2}(z,n)-y_{2}(z,m)|\le C_2\sum^n_{k=m}\|V(z,k)\|,$$
which shows that $\lim_{n\to\infty} y_{2}(z,n)=u(z;\alpha,\beta)$
exists and is finite for $z\in E\cap M$.  From the relation between
$\hat\Phi_n$ and $Y_2$ \eqref{psiytwo} and \eqref{un} we derive
$$\kappa_n^{-2}u_n(z)=(1+\rho(z,n))y_{2}(z,n)+q_{21}(z,n)y_{1}(z,n).$$
Since $y_{1}(z,n)=0(1)$, $n\to\infty$ (cf. \eqref{coordin}), and
$q_{21}(z,n)\to 0$ for $z\in E\cap M$, we find
$$u^*(z)=\lim_{n\to\infty} u_n(z)=
\prod_{j=1}^\infty (1-\alpha_j\beta_j)^{-1}\,u(z;\alpha,\beta). $$
An analogous argument shows that if $z\in \tilde E\cap M$ then
$$v^*(z)=\lim_{n\to\infty} v_n(z)=
\prod_{j=1}^\infty
(1-\alpha_j\beta_j)^{-1}\,u(z^{-1};\beta,\alpha)$$ exists and is
finite. \end{proof}

Next, set
\begin{equation}\label{taubone}
\begin{split}
E(\theta) &=\sup_{0<r\le 1}\sum_{k=0}^{\infty}|\beta_{k+1}
q_{21}(re^{i\theta},k)|, \\
\tilde E(\theta) &=\sup_{1\le
r<\infty}\sum_{k=0}^{\infty}|\alpha_{k+1}\tilde
q_{21}(re^{i\theta},k)|,\end{split}\end{equation} and
\begin{equation}\label{supboun}
N(\theta)=\sup_{0<r\le 1,n}|q_{12}(re^{i\theta},n)|,\qquad \tilde
N(\theta)=\sup_{1\le r<\infty,n}|\tilde q_{12}(re^{i\theta},n)|.
\end{equation}
We now examine what happens when $z=re^{i\theta}$ approaches the
unit circle for certain values of the argument $\theta$.

\begin{theorem}\label{Tabu}
Suppose $\eqref{ltwo}$ holds, and $z=e^{i\theta}$ is such that
$E(\theta)<\infty$ and $N(\theta)<\infty$, then for $u$ given by
$\eqref{ualbe}$ the limit
$$\lim_{r\to 1-0} u(re^{i\theta})= u^*(e^{i\theta})$$
exists. Likewise if $z=e^{i\theta}$ is such that $\tilde
E(\theta)<\infty$ and $\tilde N(\theta)<\infty$, then the limit
$$\lim_{r\to 1+0} v(re^{i\theta}) = v^*(e^{i\theta})   $$
exists. \end{theorem}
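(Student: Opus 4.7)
The plan is to re-run the Benzaid--Lutz analysis of Theorem \ref{united} not at a single boundary point $e^{i\theta}$ but along the entire radial segment $\{re^{i\theta}:r_0\le r\le 1\}$ for a fixed $r_0\in(0,1)$, with every estimate arranged to be uniform in $r$. Uniformity will convert the pointwise convergence $y_2(re^{i\theta},n)\to u(re^{i\theta})$ into a uniform one, and the existence of $\lim_{r\to 1^-}u(re^{i\theta})$ will then drop out of a Cauchy argument.

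The first step is to dominate the entries of $V(z,n)$ from \eqref{vij} uniformly in $r$. The hypothesis $N(\theta)<\infty$ immediately yields $|q_{12}(re^{i\theta},n)|\le N(\theta)$ for all $n$ and $r\in(0,1]$, and $E(\theta)<\infty$ gives $\sum_n|\beta_{n+1}q_{21}(re^{i\theta},n)|\le E(\theta)$ uniformly. The shift identity $zq_{21}(z,n+1)-q_{21}(z,n)=-\alpha_{n+1}z$, used exactly as on the circle in the proof of Theorem \ref{united}, produces
$$|\beta_{n+1}q_{21}(re^{i\theta},n+1)|\le r_0^{-1}|\beta_{n+1}q_{21}(re^{i\theta},n)|+|\alpha_{n+1}\beta_{n+1}|,$$
and summing (using $\alpha,\beta\in\ell^2$) transfers the control to $\sum_n|\beta_{n+1}q_{21}(re^{i\theta},n+1)|$. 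Plugging these bounds into \eqref{vcircle} delivers $\sum_n\|V(re^{i\theta},n)\|\le C(\theta)$ uniformly in $r\in[r_0,1]$.

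Step 2 reruns the chain \eqref{firstbound}--\eqref{coordin} with $\|\La^{-1}\|\le r_0^{-1}$ absorbed into the constant: uniform summability of $V$ furnishes $\sup_{n,\,r\in[r_0,1)}\|Y(re^{i\theta},n)\|<\infty$ together with
$$|y_2(re^{i\theta},n)-y_2(re^{i\theta},m)|\le C_1\sum_{k=m}^{n-1}\|V(re^{i\theta},k)\|,$$
whose right-hand side is the tail of a uniformly convergent series. Sending $n\to\infty$ (legitimate for each $r<1$ by Theorem \ref{szego}) gives $|u(re^{i\theta})-y_2(re^{i\theta},m)|\le\ep_m$ with $\sup_{r\in[r_0,1)}\ep_m\to 0$ as $m\to\infty$. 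A triangle inequality
$$|u(r_1e^{i\theta})-u(r_2e^{i\theta})|\le 2\ep_m+|y_2(r_1e^{i\theta},m)-y_2(r_2e^{i\theta},m)|$$
then reduces the desired Cauchy property of $u(re^{i\theta})$ to the assertion that each $y_2(\cdot,m)$ has a radial limit at $e^{i\theta}$.

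The main obstacle lies in this last reduction. Unfolding \eqref{newvar} gives $y_2(z,m)=\kappa_m^{-2}u_m(z)-q_{21}(z,m)\kappa_m^{-1}\phi_m(z)$; the polynomial factors $u_m$ and $\phi_m$ are trivially continuous at $e^{i\theta}$, so everything boils down to establishing $\lim_{r\to 1^-}q_{21}(re^{i\theta},m)$, i.e., the existence of the radial limit of the $H^2$-function $-\sum_{k>m}\alpha_kz^{k-m}$ at $e^{i\theta}$. Only an a.e.\ statement is free from the $\ell^2$ assumption; at this point I expect to combine the uniform bound $|q_{21}(re^{i\theta},m)|\le E(\theta)/|\beta_{m+1}|$ coming from $E(\theta)<\infty$ with the bounded partial sums of $\sum\beta_ke^{-ik\theta}$ implicit in $N(\theta)<\infty$, via an Abel--Dirichlet summation (or an iteration of the telescoping identity of Step~1), to extract the radial limit pointwise. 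The corresponding statement for $v$ then follows from the symmetry \eqref{phipsi} by applying the established $u$-result to the system with $(\alpha,\beta)$ replaced by $(\beta,\alpha)$ and $\theta$ by $-\theta$, which recasts the hypotheses as $\tilde E(\theta),\tilde N(\theta)<\infty$.
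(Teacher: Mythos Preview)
Your Steps 1--2 coincide with the paper's argument: $N(\theta)$ bounds $|q_{12}|$ uniformly in $r$ and $n$, $E(\theta)$ together with the telescoping identity for $q_{21}$ controls $\sup_{0<r\le1}\sum_n\|V(re^{i\theta},n)\|$, and one arrives at the uniform Cauchy estimate for $y_2(re^{i\theta},\cdot)$ exactly as the paper does.

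The gap is in your last reduction. You keep $y_2(\cdot,m)$ as the approximant and are then forced to establish the radial limit of $q_{21}(re^{i\theta},m)=-\sum_{j\ge1}\alpha_{m+j}(re^{i\theta})^j$ at the specific point $e^{i\theta}$; you concede this is the ``main obstacle'' and leave it to a sketched Abel--Dirichlet argument that is not carried out and is not obviously available from the hypotheses (note that $N(\theta)<\infty$ controls the partial sums of $\sum\beta_ke^{-ik\theta}$, not of $\sum\alpha_ke^{ik\theta}$, so summation by parts against $\{\alpha_{m+j}\}$ has no handle). The paper sidesteps the issue entirely by changing the approximating sequence: from the uniform convergence of $y_2$ it passes to that of $u_n$, and then observes that the $u_n$ are \emph{polynomials}, hence continuous on $0<r\le1$. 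The uniform limit of functions continuous at $r=1$ has a radial limit there automatically, and no boundary behaviour of $q_{21}$ is ever invoked. In your triangle inequality, replacing $y_2(r_je^{i\theta},m)$ by $u_m(r_je^{i\theta})$ dissolves the obstacle.
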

\begin{proof}
Suppose that the first assumption holds. This implies that
inequalities \eqref{vcircle} hold with $C$ replaced by $N(\theta)$.
Since $E(\theta)<\infty$ we find that
$$\sup_{0<r\le
1}\sum^\infty_{n=0}\|V(re^{i\theta},n)\|<\infty. $$ Thus
$\sup_{0<r\le 1}\|Y(re^{i\theta},n)\|<cN(\theta)$ and
$$ \sup_{0<r\le 1} |y_{2}(re^{i\theta},n)-y_{2}(re^{i\theta},m)|
\le cN(\theta)\sup_{0<r\le 1} \sum^n_{i=m} \|V(re^{i\theta},i)\|.
$$
Hence $y_{2}(re^{i\theta},n)$ converges uniformly for $0<r\le 1$ to
$u$. So $u_n(z)$ converges $u^*(z)$ uniformly for $0<r<1$. Since
$u_n$ are continuous for $0<r\le 1$ (they are polynomials) we find
that $\lim_{r\to 1-0} u(re^{i\theta})= u_*(e^{i\theta})$. An
analogous argument proves the second statement. \end{proof}

Finally, we conclude that if $z\in M$, and both $E(\theta)$ and
$N(\theta)$ are finite, then
$$ \lim_{r\to 1-0}\lim_{n\to\infty} u_n(re^{i\theta})=
\lim_{n\to\infty}\lim_{r\to 1-0}u_n(re^{i\theta}). $$ Similarly, if
$z\in M$, and both $\tilde E(\theta)$ and $\tilde N(\theta)$ are
finite, then
$$ \lim_{r\to 1+0}\lim_{n\to\infty} v_n(re^{i\theta})=
\lim_{n\to\infty}\lim_{r\to 1+0} v_n(re^{i\theta}). $$

In the setting of polynomials orthogonal with respect to a positive
measure $\mu$ on the unit circle ($\beta_n=\bar\alpha_n$,
$|\alpha_n|<1$), the tauberian problem in the Szeg\H{o} class was
studied first in \cite[Chapter 5]{Ger61}. In this case $M=\tilde M$
and
\begin{equation}\label{nikishin}
E=\tilde E=\{z=e^{i\theta}: \sum_{k=0}^{\infty}|\bar\alpha_{k+1}
\sum_{j=k+1}^\infty \alpha_j z^{j-k}|<\infty\}. \end{equation}

Condition \eqref{nikishin} appeared in \cite{Ni}, where it was shown
to imply a bound for orthogonal polynomials
$|\log|\hat\Phi_n(z)||=O(1)$ (see also \cite{Go2}). \eqref{nikishin}
is a key ingredient in \cite{Go1}, where measures on the unit circle
with slowly decaying parameters are studied and the Benzaid--Lutz
method is applied. It is shown there that the uniform convergence in
\eqref{unitcirboundone} on an arc along with \eqref{nikishin}
implies the uniform convergence of the reversed polynomials on the
same arc ($\mu$ is said to admit a uniform asymptotic
representation). Damanik \cite{Da} proved that the singular
component $\mu$ is supported on the complement of $E$.

We complete with two examples (the first one is borrowed from
\cite[Chapter V.4]{Z}).

\begin{example} It is known that the Fourier series
$$ g_\alpha(e^{i\theta})=\sum_{k=1}^\infty {e^{ick\log
k} \over k^{1/2+\alpha}}\,e^{ik\theta}, \qquad 0<\alpha<1, \quad c>0
$$
converges uniformly on $\bf T$, and $g_\alpha\in{\rm Lip}(\alpha)$.
Hence (see \cite[formula (13.26)]{Z})
$$ q_{21}(e^{i\theta},n)=O(n^{-\alpha}\log n) $$
uniformly on $\bf T$, and the measure $\mu$ with parameters
$\alpha_n=n^{-3/4-\varepsilon}e^{icn\log n}$, $\varepsilon>0$,
admits the uniform asymptotic representation on the whole circle.
The same result holds for $\mu$ with
$\alpha_n=n^{-3/4-\varepsilon}e^{in^\alpha}$, $0<\alpha<1$.
\end{example}

\begin{example} Let
$$ \alpha_n={1\over{n^\gamma}}\sum_{j=1}^m b_je^{i\lambda_j n},
\qquad b_j\in\bf {C}, \quad \lambda_k\not=\lambda_j $$ for large
enough $n\ge n_0$. We have
$$ -e^{in\theta}q_{21}(e^{i\theta},n)=\sum_{k=n+1}^\infty
\alpha_k e^{ik\theta}=\sum_{j=1}^mb_j\sum_{k=n+1}^\infty {1\over
k^\gamma}e^{ik(\lambda_j+\theta)}
$$
so the series converges uniformly inside $\bf
{T}\backslash\{\zeta_1,\ldots,\zeta_m\}$, $\zeta_k=e^{-i\lambda_k}$
with the bound $O(1/n^{\gamma})$. Hence for $\gamma>1/2$ the measure
with such parameters admits the uniform asymptotic representation on
$\bf {T}\backslash\{\zeta_1,\ldots,\zeta_m\}$. \end{example}

\end{document}